\documentclass[11pt,a4paper]{article}

\usepackage{amssymb, amsmath}

\setlength{\headheight}{0cm} \setlength{\headsep}{0cm}
\setlength{\oddsidemargin}{0.6cm} \setlength{\evensidemargin}{0.6cm}
\setlength{\textheight}{22cm} \setlength{\textwidth}{15cm}

\def\A{\mathbb{A}}
\def\C{\mathbb{C}}
\def\R{\mathbb{R}}
\def\N{\mathbb{N}}

\def\Q{\mathbb{Q}}
\def\Z{\mathbb{Z}}

\def\cA {{\cal A}}

\def\tr {{\rm tr}}

\newtheorem{defn}{Definition}

\newtheorem{lemma}[defn]{Lemma}
\newtheorem{proposition}[defn]{Proposition}
\newtheorem{theorem}[defn]{Theorem}

\newtheorem{notation}[defn]{Notation}
\newtheorem{example}[defn]{Example}

           {\vspace{3.3mm}
           \noindent{\bf #1}\it}%
           {\vspace{3.3mm}}

\newenvironment{proof}[1]{
  \trivlist \item[\hskip \labelsep{\it #1}]}{\hfill\mbox{$\square$}
  \endtrivlist}

\title{On the minimum of a positive polynomial over the standard simplex}

\author{Gabriela Jeronimo$^{\textrm{a,b,}}$\thanks{Partially supported by the following Argentinian research grants: UBACyT X847 (2006-2010) and PIP CONICET  5852/05.} \quad Daniel Perrucci$^{\textrm{a,}*}$ \\[5mm]
{$^{\textrm{a}}$ {\small Departamento de Matem\'atica, Facultad de
Ciencias
Exactas y Naturales,}}\\
{{\small Universidad de Buenos Aires, Ciudad Universitaria, 1428
Buenos Aires, Argentina}}
\\[2mm]
{$^{\textrm{b}}$ {\small CONICET - Argentina }}}

\begin{document}

\maketitle

\begin{abstract}
We present a new positive lower bound for the minimum value taken by a polynomial $P$ with integer coefficients in $k$ variables over the standard simplex of $\R^k$, assuming that $P$ is positive on the simplex. This bound depends only on the number of variables $k$, the degree $d$ and the bitsize $\tau$ of the coefficients of $P$ and improves all previous bounds for arbitrary polynomials which are positive over the simplex.
\end{abstract}

\section{Introduction}

In the last years, the problem of determining the positivity of a polynomial in $k$ variables with real coefficients in (a subset of) $\R^k$ has been studied extensively with different approaches (see, for instance, \cite{PD}). One of them consists in exhibiting a \emph{certificate of positivity}, that is to say, an algebraic identity showing explicitly that the polynomial is positive over the considered set (see \cite{BCR87}).
In order to construct these certificates of positivity, it is useful to know an \emph{a priori} lower bound for the minimum of a polynomial which only takes positive values on the set (see for instance \cite{PR01}, \cite{Sch04}, \cite{Leroy}). For bounded subsets of $\R^k$, such a bound can be obtained by means of Lojasiewicz inequalities (see \cite{BCR87} or \cite{Sol91}), as it is done in \cite{dLS96} for the case of the standard simplex of $\R^k$. However, these bounds involve a universal constant.

This papers considers the problem of finding an explicit lower bound
for the minimum of a polynomial $P\in \Z[X_1,\dots, X_k]$ over the
standard $k$-dimensional simplex $\Delta_k = \{ x\in \R^k_{ \ge 0}
\mid \sum_{i=1}^k x_i \le 1 \}$, assuming that $P$ takes only
positive values on $\Delta_k$, which depends only on the number of
variables $k$ of $P$, its degree $d$, and an upper bound $\tau$ for
the bitsize of its coefficients.

Under non-degeneracy conditions, a lower bound of this kind can be obtained by applying Canny's gap theorem (\cite{Can87}). In \cite{EMT}, an improved gap theorem is proved and, consequently, a better bound under the same assumptions is derived.
The best known lower bound for the minimum with no extra assumptions on $P$ was given in \cite{BLR}, where the minimum is estimated by means of an analysis of the values that the polynomial takes on the boundary of the simplex and its critical values in the interior.

Here we present a new lower bound for the minimum in the general case which improves the previous ones. Our main result is the following:

\begin{theorem}\label{theorembound}
For every $P\in \Z[X_1,\dots, X_k]$ with degree $d$ and coefficients
of bitsize at most $\tau$ which only takes positive values over the
standard simplex $\Delta_k$, we have
$$\min_{\Delta_k} P \ge 2^{-(\tau+1)  d^{k+1}} d^{-(k+1) d^{k}} \binom{d+k}{k+1}^{-d^k(d-1)}$$ Taking into account that $\binom{d+k}{k+1} \le d^{k+1}$, we obtain the simplified bound
$$\min_{\Delta_k} P \ge 2^{-(\tau+1)  d^{k+1}} d^{-(k+1) d^{k+1}}.$$
\end{theorem}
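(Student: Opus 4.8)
The plan is to exhibit a nonzero univariate polynomial with integer coefficients, of controlled degree and height, that vanishes at $\mu:=\min_{\Delta_k}P$, and then to apply the Cauchy root bound. First, $\mu$ is attained at a point $\xi$ lying in the relative interior of some face $F$ of $\Delta_k$; a face is cut out from $\Delta_k$ by setting some coordinates $X_i$ equal to $0$ and, possibly, by activating the constraint $\sum_i X_i=1$. Since $\xi$ is then a local minimizer of $P$ restricted to the affine span $\mathrm{aff}(F)$, it satisfies the corresponding first order conditions: for each free coordinate $l$, the vanishing of $\partial P/\partial X_l$ (of degree $d-1$), corrected by a single Lagrange multiplier $\nu$ when $\sum_i X_i=1$ is active, together with the linear relations defining $\mathrm{aff}(F)$. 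One keeps the original $P$ throughout, so its bitsize stays $\le\tau$ (differentiation only adds $\log_2 d$); equivalently, substituting $X_{i_0}=1-\sum_l X_l$ removes the linear constraint at the cost of increasing the bitsize by at most $\log_2\binom{d+k}{k}+d\log_2 k+O(1)$. Hence it suffices to lower bound a \emph{positive} value taken at an interior critical point of such a system by a polynomial with integer coefficients of controlled bitsize.

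Next, append the equation $T-P=0$, with $T$ a fresh variable, to the first order system. The key structural fact is that $P$ is constant along each irreducible component of the complex variety cut out by the first order equations, because the differential of $P$ along $\mathrm{aff}(F)$ vanishes identically on that variety; hence only finitely many values of $T$ occur on it, and $\mu$ is one of them. Eliminating all the variables but $T$ therefore yields a nonzero polynomial $\widetilde R\in\Z[T]$ with $\widetilde R(\mu)=0$. I would realize $\widetilde R$ as a nonzero factor of the determinant of a Macaulay type resultant matrix of the homogenized system: its size is a binomial coefficient of order $\binom{d+k}{k+1}$, its entries are coefficients of $P$ (of bitsize $\le\tau+\log_2 d$) and small constants, and its degree in $T$ is at most the product of the degrees of the remaining equations, of order $d^{k}(d-1)$. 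Hadamard's inequality and routine bookkeeping then bound $\deg\widetilde R$ and $\log_2\|\widetilde R\|_\infty$ by the quantities in the statement, essentially a sum of the three contributions $(\tau+1)\,d^{k+1}$, $(k+1)\,d^k\log_2 d$ and $d^k(d-1)\log_2\binom{d+k}{k+1}$.

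Finally, since $\widetilde R\in\Z[T]\setminus\{0\}$ and $\mu>0$, dividing out a possible power of $T$ leaves a polynomial with nonzero integer constant term, so the Cauchy bound gives $\mu\ge\bigl(1+\|\widetilde R\|_\infty\bigr)^{-1}$. Substituting the estimates above yields the first inequality, and $\binom{d+k}{k+1}\le d^{k+1}$ gives the simplified one.

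The step I expect to be the main obstacle is the elimination above when the first order variety is positive dimensional or has points at infinity: the plain resultant of the system can then vanish identically, and one must still extract a genuinely nonzero eliminant while keeping its height small — for instance via a generic linear change of coordinates bringing the critical variety into Noether position and eliminating with respect to the free variables, via the Chow form of that variety composed with $P$, or via a controlled infinitesimal deformation followed by a limiting argument. Carrying this out explicitly, and optimizing the degree and height bounds it yields, is what produces the precise exponents in the theorem; by contrast, the face reduction and the Cauchy estimate are routine.
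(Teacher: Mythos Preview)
Your overall plan---reduce to an interior critical point on a face, build a univariate eliminant for the critical value, apply a Cauchy-type root bound---matches the paper's architecture. You also correctly locate the genuine obstacle: in degenerate cases the first-order system $\partial P/\partial X_l=0$ can cut out a positive-dimensional variety (or behave badly at infinity), so a naive Macaulay resultant collapses and you cannot just ``take a nonzero factor'' while keeping the height under control. What you leave as an obstacle is precisely the content of the paper.

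The paper resolves it by the third of your suggested fixes, a controlled infinitesimal deformation, but in a very specific form that drives the exponents. One replaces $P$ by $F(t,X)=P(X)+t\sum_i X_i^{d+1}/(d+1)$, so that the critical equations become $F_i=\partial P/\partial X_i+tX_i^d$. Over $\mathbb{C}(t)$ this system is automatically a regular sequence with quotient $W=\mathbb{C}(t)[X]/(F_1,\dots,F_k)$ of dimension exactly $d^k$ and monomial basis $\{X^\gamma:0\le\gamma_i\le d-1\}$. One then does \emph{not} use a resultant matrix but rather the characteristic polynomial of the multiplication map $m_R$ on $W$, where $R=dP-\sum_i X_i\,\partial P/\partial X_i$ (note $\deg R\le d-1$ and $R(z_0)=d\,P(z_0)$ at critical points). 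The rewriting of monomials in this basis involves only division by $t$, so all entries lie in $\mathbb{Z}[1/t]$; a careful recursion bounds their degrees and coefficient sizes, and the trace--Newton identities then bound the coefficients of $\chi(m_R)\in\mathbb{Z}[1/t][Y]$. Clearing the power of $t$ and specializing $t=0$ yields a nonzero $S(0,Y)\in\mathbb{Z}[Y]$ vanishing at $d\,P(z_0)$, with exactly the height that produces the three exponents in the statement. The binomial $\binom{d+k}{k+1}$ arises from a combinatorial identity in the rewriting estimates, not from the size of a Macaulay matrix; a generic resultant/Hadamard argument would give a looser bound, so your ``routine bookkeeping'' would not land on the stated constants.

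Finally, the face reduction is not quite free: substituting $X_{i_0}=1-\sum_l X_l$ raises the bitsize to $\tau+1+d\log k$, and one must check that the interior bound at level $k$ dominates the inductively obtained bound at level $k-1$ with this larger bitsize. The paper does this via an explicit elementary inequality; it is not hard, but it is a step, not a triviality.
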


Our approach combines the application of the critical point method as in \cite{BLR} with deformation techniques similar to those used in \cite{JPS09} to compute critical values. This deformation-based approach enables us to work, even in degenerate cases, with a polynomial system defining the critical points of an associated polynomial instead of taking the sum of squares of the polynomials involved, as it is done in \cite{BLR} leading to an artificial degree growth. Moreover, we estimate the values that the polynomial takes at the critical points by computing upper bounds on the coefficients of the characteristic polynomial of a multiplication map in the associated quotient algebra, with no need of a previous explicit description of these critical points.

\section{A lower bound for the minimum}

For $k\in \N$, consider the $k$-dimensional standard simplex
$$ \Delta_k = \Big\{ x\in \R_{ \ge 0}^k \mid \sum_{i=1}^k x_i \le 1\Big\},$$
and for $k, d, \tau \in \N$, let
$$\cA_{k,d,\tau}= \{ P \in \Z[X_1,\dots, X_k] \mid \deg(P)\le d , \ h(P) \le \tau,\ P(x) >0 \ \forall\, x \in \Delta_k\}$$
(here, $\deg(P)$ denotes the total degree of $P$ and $h(P)$ the
maximum bitsize of its coefficients). We are interested in computing
an explicit lower bound for
$$m_{k,d,\tau} = \min \{ \min_{\Delta_k} P \mid P\in \cA_{k,d,\tau} \},$$
the minimum value over the standard simplex of a polynomial $P\in \cA_{k,d,\tau}$, depending only on $k, d $ and $\tau$.

We will analyze first the case where $P$ attains its minimum only at
interior points of the simplex and then, we will proceed recursively
to deal with the case where the minimum is attained at a point of
the boundary. In order to do this, we consider
\begin{eqnarray*}
\cA_{k,d,\tau}^{(b)} & =& \{ P\in \cA_{k,d,\tau} \mid \exists z \in \partial \Delta_k \hbox{ such that } P(z) = \min_{\Delta_k} P \},\\
\cA_{k,d,\tau}^{(0)} & =& \cA_{k,d,\tau}\setminus \cA_{k,d,\tau}^{(b)}.
\end{eqnarray*}


\subsection{The deformation}\label{deformation}

Fix a polynomial $P\in \cA_{k,d,\tau}^{(0)}$.
Let $Q(X) = \sum_{i=1}^k \frac{1}{d+1} X_i^{d+1}$ and $F(t,X) = P(X) +t Q(X)$. For $i=1,\dots, k$, let $$F_i(t,X) = \frac{\partial F}{\partial X_i} =
\frac{\partial P}{\partial X_i} +t X_i^d.$$

Following \cite{JPS09}, consider the variety $\widehat V =
V(F_1,\dots, F_k) \subseteq \A_\C^1 \times \A_\C^k$ and its
decomposition
$$\widehat V  = V^{(0)} \cup V^{(1)}\cup V,$$
where $ V^{(0)}$ is the union of the irreducible components of $\widehat V$ contained in $\{ t= 0\}$, $ V^{(1)}$ is the union of the irreducible components of $\widehat V$ contained in $\{ t= t_0\}$ for some $t_0\in \C\setminus \{0\}$ and $V$ is the union of the remaining irreducible components of $\widehat V$.

\begin{lemma}\label{minimizer}
There exists $z_0\in \Delta_k$ such that $P(z_0) = \min_{\Delta_k}
P$ and $(0,z_0) \in V$.
\end{lemma}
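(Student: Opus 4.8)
The plan is to show that the minimizer $z_0$ of $P$ on $\Delta_k$, which by hypothesis lies in the interior of $\Delta_k$, gives rise to a point $(0,z_0)$ that sits on $V$ rather than on the spurious components $V^{(0)}$ or $V^{(1)}$. First I would observe that since $P\in\cA_{k,d,\tau}^{(0)}$, its minimum over $\Delta_k$ is attained only at interior points; pick such a point $z_0$. Since $z_0$ is a local (in fact global) minimum of $P$ on an open neighbourhood inside $\Delta_k$, all partial derivatives vanish there: $\frac{\partial P}{\partial X_i}(z_0)=0$ for $i=1,\dots,k$. Hence $F_i(0,z_0)=\frac{\partial P}{\partial X_i}(z_0)+0\cdot z_{0,i}^d=0$, so $(0,z_0)\in\widehat V$. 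It remains to locate which component of the decomposition $\widehat V = V^{(0)}\cup V^{(1)}\cup V$ contains $(0,z_0)$.

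Clearly $(0,z_0)\notin V^{(1)}$, since every point of $V^{(1)}$ has first coordinate $t_0\neq 0$. The real work is to rule out $(0,z_0)\in V^{(0)}$, i.e. to show that $(0,z_0)$ does not lie on an irreducible component of $\widehat V$ entirely contained in the hyperplane $\{t=0\}$. The strategy is to produce, for $t$ in a small punctured neighbourhood of $0$, a real critical point $z(t)$ of $F(t,\cdot)$ near $z_0$, i.e. a solution of $F_i(t,X)=0$ with $z(t)\to z_0$ as $t\to 0$; the curve $t\mapsto (t,z(t))$ then lies in $\widehat V$, is not contained in $\{t=0\}$, and passes through $(0,z_0)$, forcing $(0,z_0)$ to lie on a component meeting $t\neq 0$, hence on $V$. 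To get such a curve one would like to apply the implicit function theorem to the map $(t,X)\mapsto (F_1,\dots,F_k)$ at $(0,z_0)$, whose Jacobian in the $X$-variables is the Hessian of $P$ at $z_0$. The obstacle is that this Hessian need not be invertible (this is exactly the degenerate case the deformation is designed to handle), so the bare implicit function theorem fails.

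To overcome this I would argue instead with the minimization itself rather than with the equations. For each small real $t>0$, the function $F(t,\cdot)=P+tQ$ is positive on $\Delta_k$ (as $P>0$ and $Q\ge 0$ there) and, being continuous on the compact set $\Delta_k$, attains a minimum at some point $z(t)\in\Delta_k$. Since $P(z(t))\le F(t,z(t))\le F(t,z_0)=P(z_0)+tQ(z_0)$, one gets $P(z(t))\to\min_{\Delta_k}P$ as $t\to 0^+$, so any accumulation point of $z(t)$ as $t\to 0^+$ is a minimizer of $P$; as $P\in\cA_{k,d,\tau}^{(0)}$ all such minimizers are interior, so for $t$ small enough $z(t)$ lies in the interior and hence is a critical point of $F(t,\cdot)$, giving $(t,z(t))\in\widehat V$ with $t\neq 0$. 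Picking a sequence $t_n\to 0^+$ along which $z(t_n)$ converges — to an interior minimizer which, after possibly renaming, we take to be our $z_0$ — the points $(t_n,z(t_n))$ lie in $\widehat V\setminus\{t=0\}$ and converge to $(0,z_0)$. Since $\widehat V$ has finitely many irreducible components, infinitely many of the $(t_n,z(t_n))$ lie on a single component $W$; as $W$ is closed it contains $(0,z_0)$, and since $W$ contains points with $t\neq 0$ it is neither in $V^{(0)}$ nor (being not contained in a single $\{t=t_0\}$, as $t_n$ varies) in $V^{(1)}$, so $W\subseteq V$ and therefore $(0,z_0)\in V$. The main subtlety to handle carefully is the last point — ensuring the component carrying the sequence is genuinely among those defining $V$, i.e. that it is not swallowed by $V^{(1)}$ — which follows because the $t_n$ take infinitely many distinct values.
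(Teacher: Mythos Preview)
Your argument is correct and follows essentially the same route as the paper's: construct minimizers $z(t_n)$ of the perturbed function $F(t_n,\cdot)$ for a sequence $t_n\to 0^+$, use $P\in\cA_{k,d,\tau}^{(0)}$ to force them into the interior for large $n$, hence into $\widehat V$ with $t\neq 0$, and pass to the limit. The only difference is cosmetic: the paper first fixes $\varepsilon$ smaller than every $|t_0|$ with $t_0\in\pi_t(V^{(1)})$ and takes all $t_n<\varepsilon$, so that $(t_n,w_n)\in V$ holds immediately for each $n$, whereas you reach the same conclusion via a pigeonhole argument on the finitely many irreducible components.
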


\begin{proof}{Proof.}
Let $\varepsilon>0$ such that $\varepsilon< |t_0|$ for every $t_0\in
\pi_t(V^{(1)})$ (here, $\pi_t: \A^1\times \A^n\to \A^1$ denotes the
projection to the first coordinate $t$). Let $(t_n)_{n\in \N}$ be a
decreasing sequence of positive real numbers with $t_1 <\varepsilon$
and $\lim_{n\to \infty} t_n = 0$. For every $n\in \N$, let $w_n \in
\Delta_k$ such that $F(t_n, w_n) = \min_{ \Delta_k} F(t_n, - )$. We
may assume that the sequence $(w_n)_{n\in \N}$ converges to a point
$ z_0\in \Delta_k$. Therefore, for every $z\in \Delta_k$, we have
$$P(z_0) = F(0,z_0) = \lim_{n\to \infty} F(t_n, w_n)\le \lim_{n\to \infty} F(t_n, z) = F(0,z) = P(z).$$

We conclude that  $P(z_0)= \min_{\Delta_k} P$. As $P\in
\cA_{k,d,\tau}^{(0)}$, the point $z_0$ lies in the interior $
\Delta_k^\circ$ of $\Delta_k$ and, therefore, $w_n\in
\Delta_k^\circ$ for $n\gg 0$.  Then, for every $n\gg 0$, $w_n$ is a
local minimum of $F(t_n, -)$ and so, $F_i(t_n, w_n)=0$ for
$i=1,\dots, k$. By the choice of $t_1$, it follows that $(t_n,
w_n)\in V$ for $n\gg0$ and therefore, $(0,z_0)\in V$.
\end{proof}

Assume $P = \sum_{|\alpha|\le d} a_\alpha X^\alpha$ and let $R\in
\Z[X]$ be the polynomial $$R(X) = d\cdot P(X) - \sum_{1 \le i \le
k}X_i\frac{\partial P}{\partial X_i}(X) = \sum_{|\alpha|\le d-1} (d
- |\alpha|)a_\alpha X^{\alpha}.$$ Note that for a point $z_0$ as in
Lemma \ref{minimizer}, since $\frac{\partial P}{\partial X_i}(z_0) =
0$ for $1 \le i \le k$, we have that $R(z_0) = d\cdot P(z_0)$.

Let $W = \C(t)[X_1,\dots, X_k]/(F_1,\dots, F_k)$, which is a  $\C(t)$-vector space of dimension $d^k$. Moreover, if
$$U = \{\gamma = (\gamma_1,\dots, \gamma_k)\in \Z^k \mid 0\le \gamma_i \le d-1\hbox{ for every } 1\le i\le k\},$$
we have that $\{ X^\gamma \mid \gamma\in U\}$ is a basis of $W$.
For a polynomial $g\in \Z[X]$, $m_g$ will denote
 the multiplication map $m_g: W\to W$, $m_g([f])= [g\cdot f]$,  and $\chi({m_g})\in \C(t)[Y]$ the characteristic polynomial of this linear map.

We are going to show that $\chi({m_R}) (t, Y) = S(t,Y) /t^l$, where
$S(t,Y)\in \Q[t,Y]$, $l\in \N_0$, and $S(0,Y)\not\equiv 0$. Then,
since $\chi({m_R})(t,R(X)) \in (F_1,\dots, F_k) \C(t)[X_1,\dots, X_k]$,
we have that there is a polynomial $\alpha(t) \in \C[t]$ such that
$\alpha(t) S(t, R(X)) \in (F_1,\dots, F_k) \C[t,X_1,\dots, X_k]$.
Therefore, $S(t,R(X) )\in I(V)$ and so, $S(0, R(z_0)) = 0$. The bound
on the minimum of the polynomial $P$ over the standard simplex will
be obtained from upper bounds on the size of the  coefficients of
$S(0,Y)$.

\subsection{Estimates for computations in the quotient algebra}

In order to analyze the characteristic polynomial $\chi(m_R)$, we
start by studying re-writing techniques in the basis $\{X^\gamma \mid
\gamma \in U\}$ of $W$. We follow the approach in \cite[Chapter
12]{BPR}.

For every $\beta \in \N_0^k$, the residue class of the monomial
$X^\beta$ in $W$ can be written in the form $X^\beta=
\sum_{\gamma\in U} x_{\beta, \gamma} X^\gamma$ for some elements
$x_{\beta, \gamma}\in \C(t)$. Moreover, we have:

\begin{lemma}\label{pols1t}
For every $\beta \in \N_0^k$ and every $\gamma \in U$, there is a univariate polynomial $c_{\beta, \gamma}\in \Z[T]$ such that $x_{\beta, \gamma} = c_{\beta, \gamma}(\frac{1}{t})$. Moreover, if $\beta \notin U$,  $c_{\beta, \gamma} = 0$ for every $\gamma $ with $|\gamma|\ge |\beta|$.
\end{lemma}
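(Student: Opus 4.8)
The idea is to reduce an arbitrary monomial $X^\beta$ to the basis $\{X^\gamma \mid \gamma\in U\}$ step by step, controlling at each step how the denominators (powers of $t$) and the multidegrees behave. The key observation is that the relations $F_i = \frac{\partial P}{\partial X_i} + tX_i^d = 0$ in $W$ can be rewritten as
$$X_i^d = -\frac{1}{t}\,\frac{\partial P}{\partial X_i}(X)\qquad\text{in } W,$$
so that each time we lower the exponent of some variable $X_i$ from $\ge d$ to $< d$, we pick up one factor of $\frac{1}{t}$ and replace $X_i^d$ by a polynomial of degree $\le d-1$ with \emph{integer} coefficients (the coefficients of $-\frac{\partial P}{\partial X_i}$ are in $\Z$). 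The plan is to make this reduction process precise and run an induction that tracks (a) that every coefficient appearing is a polynomial in $\frac1t$ with integer coefficients, and (b) the bookkeeping on multidegrees that yields the last assertion.

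\textbf{First step: set up the reduction and a well-founded order.} For $\beta\in\N_0^k$, if $\beta\in U$ there is nothing to do ($c_{\beta,\beta}=1$, all others $0$). If $\beta\notin U$, pick the smallest index $i$ with $\beta_i\ge d$ and write $X^\beta = X^{\beta - d e_i}\cdot X_i^d = X^{\beta-de_i}\cdot\big(-\tfrac1t\,\tfrac{\partial P}{\partial X_i}\big)$ in $W$. Expanding $-\tfrac{\partial P}{\partial X_i} = \sum_{|\delta|\le d-1} b_{i,\delta}X^\delta$ with $b_{i,\delta}\in\Z$, we get
$$X^\beta = \frac1t\sum_{|\delta|\le d-1} b_{i,\delta}\,X^{\beta - d e_i + \delta}\quad\text{in } W.$$
Each new monomial $X^{\beta-de_i+\delta}$ has total degree $|\beta| - d + |\delta| \le |\beta| - 1$, strictly less than $|\beta|$; this guarantees that iterating the rewriting terminates. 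I would formalize termination by induction on $|\beta|$ (for $|\beta|\ge \text{something}$, say $|\beta| \ge kd$ forces $\beta\notin U$, but more cleanly one inducts on $|\beta|$ over all $\beta\notin U$, since the reduction strictly decreases total degree, and monomials with $|\beta|\le d-1$ automatically lie in $U$).

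\textbf{Second step: the induction for the "integral polynomial in $\frac1t$" claim.} Proceed by induction on $|\beta|$. Base case: $|\beta|\le d-1$ implies $\beta\in U$, done. Inductive step: for $\beta\notin U$, by the identity above $x_{\beta,\gamma} = \frac1t\sum_{|\delta|\le d-1} b_{i,\delta}\, x_{\beta-de_i+\delta,\gamma}$, and each $\beta - de_i+\delta$ has strictly smaller total degree, so by induction $x_{\beta-de_i+\delta,\gamma} = c_{\beta-de_i+\delta,\gamma}(\tfrac1t)$ with $c\in\Z[T]$. Hence $x_{\beta,\gamma} = T\cdot\sum_{|\delta|\le d-1} b_{i,\delta}\,c_{\beta-de_i+\delta,\gamma}(T)\big|_{T=1/t}$, which is again an integer polynomial in $\tfrac1t$; set $c_{\beta,\gamma}(T) := T\sum_\delta b_{i,\delta}c_{\beta-de_i+\delta,\gamma}(T)$.

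\textbf{Third step: the degree refinement.} For the last assertion, suppose $\beta\notin U$ and $|\gamma|\ge|\beta|$. In the reduction $X^\beta = \frac1t\sum_\delta b_{i,\delta}X^{\beta-de_i+\delta}$, every monomial $X^{\beta'}$ with $\beta' = \beta-de_i+\delta$ satisfies $|\beta'|\le|\beta|-1 < |\beta|\le|\gamma|$, so in particular $|\gamma| > |\beta'|$. Thus it suffices to prove, as a strengthened statement proved by the same induction on $|\beta|$: \emph{for any $\beta\in\N_0^k$ and any $\gamma\in U$ with $|\gamma| > |\beta|$, $c_{\beta,\gamma}=0$} (and separately handle $|\gamma| = |\beta|$ only when $\beta\notin U$, but note when $|\gamma| = |\beta|$ and $\beta\notin U$ then $\beta'$ has $|\beta'| < |\beta| = |\gamma|$ so again $|\gamma| > |\beta'|$, reducing to the strict case). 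The strict statement's base case: $|\beta|\le d-1$, $\beta\in U$, so $x_{\beta,\gamma} = [\beta=\gamma]$, which is $0$ since $|\gamma|>|\beta|$ forbids $\gamma=\beta$. Inductive step for $\beta\notin U$: each $x_{\beta',\gamma}$ with $|\gamma|>|\beta|>|\beta'|$ vanishes by induction, so $x_{\beta,\gamma}=0$. Combining, for $\beta\notin U$ and $|\gamma|\ge|\beta|$ we get $c_{\beta,\gamma}=0$.

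\textbf{Main obstacle.} The only subtlety is organizing the induction so that the degree bookkeeping in the third step is airtight: one must pass to the slightly stronger statement (vanishing whenever $|\gamma|>|\beta|$, for \emph{all} $\beta$, not only $\beta\notin U$) so that the inductive hypothesis is actually applicable to the reduced monomials $\beta' = \beta - de_i + \delta$, whose total degree has dropped; the stated "$\beta\notin U$" hypothesis alone is not an inductive invariant since $\beta'$ may well lie in $U$. Everything else is a routine termination-plus-induction argument, and the integrality of the coefficients $b_{i,\delta}$ of $\partial P/\partial X_i$ (which holds because $P\in\Z[X]$) is what keeps $c_{\beta,\gamma}\in\Z[T]$ throughout.
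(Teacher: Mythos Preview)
Your proposal is correct and follows essentially the same approach as the paper: induction on $|\beta|$, using the rewriting $X_i^d = -\tfrac{1}{t}\,\partial P/\partial X_i$ in $W$ to strictly decrease total degree, and tracking that each reduction step introduces one factor of $\tfrac{1}{t}$ with integer coefficients. The only organizational difference is that you handle the subtlety ``$\beta' = \beta - d e_i + \delta$ may lie in $U$'' by strengthening the inductive statement to cover all $\beta$ with $|\gamma|>|\beta|$, whereas the paper instead splits the rewriting sum explicitly into the terms with $\alpha+\tilde\beta\in U$ (which contribute only to $|\gamma|<|\beta|$ directly) and those with $\alpha+\tilde\beta\notin U$ (where the unmodified inductive hypothesis applies); both devices achieve the same thing.
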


\begin{proof}{Proof.}
First note that, for $\beta\in U$, the identity holds trivially with $c_{\beta, \gamma} = 0$ if $\gamma \ne \beta$ and $c_ {\beta, \gamma} = 1$ if $\gamma = \beta$.

For $\beta \notin U$, there exists an index $i$ such that $\beta_i
\ge d$ and, so, $\beta = \tilde \beta + d e_i$ with $\tilde \beta
\in \N_0^k$. We proceed by induction on $|\beta|$, starting with
$|\beta| = d$. In this case, we have that $\beta = d e_i$ and the
following identity holds in $W$:
\begin{equation}\label{rewriting}
 X_i^d =  \sum_{|\alpha| <d} - a_{\alpha +e_i} (\alpha_i +1) \frac1t X^{\alpha }.
 \end{equation}
We conclude that $c_{\beta, \gamma}=  - a_{\gamma +e_i} (\gamma_i +1) T$ if $|\gamma|< d = |\beta|$ and $c_{\beta, \gamma}= 0$ if $|\gamma|\ge d = |\beta|$.

Now, if $\beta = \tilde \beta + d e_i$, we have
$$X^\beta = X_i^d X^{\tilde \beta} =
\sum_{|\alpha| <d} - a_{\alpha +e_i} (\alpha_i +1) \frac1t X^{\alpha +\tilde
\beta};$$
therefore,
$$ X^\beta=\sum_{|\alpha| <d,\, \alpha +\tilde \beta \in U} - a_{\alpha +e_i} (\alpha_i +1) \frac1t X^{\alpha +\tilde \beta}
+ \sum_{|\alpha| <d,\, \alpha +\tilde \beta \notin U} - a_{\alpha
+e_i} (\alpha_i +1) \frac1t \sum_{\gamma\in U} x_{\alpha +\tilde
\beta, \gamma} \, X^{\gamma}$$
\begin{equation}\label{rewriting2}
 = \sum_{\gamma \in U, \,\gamma = \alpha +\tilde \beta,\, |\alpha|<d} -a_{\alpha +e_i} (\alpha_i +1) \frac1t X^{\gamma} + \sum_{\gamma \in U}
\Big(\sum_{|\alpha |< d,\, \alpha +\tilde \beta \notin U }
-a_{\alpha+e_i}(\alpha_i+1) \frac1t \, x_{\alpha +\tilde \beta,
\gamma}\Big) X^\gamma.
\end{equation}
Note that for every $\alpha$ such that $|\alpha|<d$, we have that
$|\alpha + \tilde \beta| =|\alpha |+ |\tilde \beta|<d +|\beta| -d =
|\beta|$; then,  by our inductive assumption, it follows that
$x_{\alpha +\tilde \beta, \gamma} = 0$ whenever $\alpha +\tilde
\beta \notin U$ and $|\alpha + \tilde \beta| \le |\gamma|$. Using
the previous identity, this implies that $x_{\beta , \gamma} = 0$
for every $\gamma \in U$ with $|\gamma |\ge |\beta|$.

The inductive assumption also states that $x_{\alpha +\tilde \beta, \gamma}= c_{\alpha +\tilde \beta, \gamma}(\frac{1}{t})$ for every $\alpha$ with $|\alpha|<d$ and every $\gamma \in U$; therefore, taking into account identity (\ref{rewriting2}), for every $\gamma \in U$ with $|\gamma| < |\beta|$, we have that $x_{\beta, \gamma } = c_{\beta, \gamma}(\frac1t)$, where
 \begin{equation}\label{coeffs1}
 c_{\beta, \gamma} = \sum\limits_{|\alpha|<d,\, \alpha +\tilde \beta \notin U} - a_{\alpha +e_i} (\alpha_i +1) c_{\alpha +\tilde \beta, \gamma} T\in \Z[T]
 \end{equation}
 if   $\gamma \ne \alpha +\tilde \beta$  for every $ \alpha$  with $|\alpha| <d$, and
\begin{equation}\label{coeffs2}
c_{\beta, \gamma} = - a_{(\tilde \alpha + e_i)} (\tilde \alpha_i +1)  T + \sum\limits_{|\alpha|<d,\, \alpha +\tilde \beta \notin U} - a_{\alpha +e_i} (\alpha_i +1) c_{\alpha +\tilde \beta, \gamma }\, T\in \Z[T]
 \end{equation}
 if $ \gamma = \tilde \alpha +\tilde \beta$ with $ |\tilde \alpha| <d$.
\end{proof}

\begin{notation}
For a univariate polynomial $c \in \Z[T]$, we use the notation $c_l$ to indicate the coefficient of the monomial $T^l$ in $c$.
\end{notation}

\begin{lemma}\label{bound_c}
For every $\beta\in \N_0^k- U$ and every $\gamma \in U$ with $|\gamma|<|\beta|$, $\deg c_{\beta, \gamma}\le |\beta| - |\gamma|$ and, for $0 \le l \le |\beta| - |\gamma|$, $$|c_{\beta, \gamma, l}| \le 2^{l\tau}d\binom{d+k}{k+1}^{l-1}.$$
\end{lemma}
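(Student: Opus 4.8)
The plan is to prove both assertions simultaneously by induction on $|\beta|$, following exactly the recursive structure of the proof of Lemma \ref{pols1t}. The base case $|\beta| = d$, hence $\beta = d e_i$: from identity (\ref{rewriting}) we read off $c_{\beta, \gamma} = -a_{\gamma + e_i}(\gamma_i + 1)\, T$ for $|\gamma| < d$, so $\deg c_{\beta, \gamma} = 1 = |\beta| - |\gamma|$ only when $|\gamma| = d - 1$ and $\leq |\beta| - |\gamma|$ otherwise, and $|c_{\beta, \gamma, 1}| = |a_{\gamma + e_i}|(\gamma_i + 1) \leq 2^\tau \cdot d$, which matches the claimed bound $2^{1\cdot \tau} d \binom{d+k}{k+1}^{0}$ since $\gamma_i + 1 \leq d$. (For $l = 0$ the coefficient vanishes, so there is nothing to check.)

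For the inductive step, write $\beta = \tilde\beta + d e_i$ with $|\tilde\beta| = |\beta| - d$, and use formulas (\ref{coeffs1})--(\ref{coeffs2}) expressing $c_{\beta, \gamma}$ as a $\Z$-linear combination, with coefficients $-a_{\alpha + e_i}(\alpha_i + 1)$ and possibly one extra term $-a_{\tilde\alpha + e_i}(\tilde\alpha_i + 1)T$, of the polynomials $T \cdot c_{\alpha + \tilde\beta, \gamma}$ ranging over $\alpha$ with $|\alpha| < d$ and $\alpha + \tilde\beta \notin U$. Each inner index satisfies $|\alpha + \tilde\beta| < |\beta|$ and $|\alpha + \tilde\beta| > |\gamma|$ (the latter because, as shown in Lemma \ref{pols1t}, $c_{\alpha + \tilde\beta, \gamma} = 0$ once $|\gamma| \geq |\alpha + \tilde\beta|$), so the inductive hypothesis applies and gives $\deg c_{\alpha + \tilde\beta, \gamma} \leq |\alpha + \tilde\beta| - |\gamma| = |\alpha| + |\tilde\beta| - |\gamma|$. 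Multiplying by $T$ and noting $|\alpha| \leq d - 1$, every summand has degree at most $(d-1) + |\tilde\beta| - |\gamma| + 1 = |\beta| - |\gamma|$ (the stray $-a_{\tilde\alpha+e_i}(\tilde\alpha_i+1)T$ term has degree $1 \leq |\beta| - |\gamma|$), which proves the degree bound.

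For the coefficient bound, fix $l$ with $0 \leq l \leq |\beta| - |\gamma|$; the coefficient $c_{\beta, \gamma, l}$ is a sum over the relevant $\alpha$ of terms of the form $-a_{\alpha+e_i}(\alpha_i+1)\, c_{\alpha+\tilde\beta, \gamma, l-1}$ (plus at most one term $-a_{\tilde\alpha+e_i}(\tilde\alpha_i+1)$ contributing only when $l = 1$, bounded by $2^\tau d$). Bounding $|a_{\alpha+e_i}| \leq 2^\tau$ and $(\alpha_i + 1) \leq d$ and applying the inductive estimate $|c_{\alpha+\tilde\beta, \gamma, l-1}| \leq 2^{(l-1)\tau} d \binom{d+k}{k+1}^{l-2}$, each summand is at most $2^\tau \cdot d \cdot 2^{(l-1)\tau} d \binom{d+k}{k+1}^{l-2} = 2^{l\tau} d^2 \binom{d+k}{k+1}^{l-2}$, and the number of indices $\alpha$ with $|\alpha| < d$ in $\N_0^k$ is $\binom{d+k-1}{k} \leq \binom{d+k}{k+1}$ (using $\binom{d+k-1}{k} = \frac{k+1}{d+k}\binom{d+k}{k+1} \leq \binom{d+k}{k+1}$); hence the total is at most $\binom{d+k}{k+1} \cdot 2^{l\tau} d^2 \binom{d+k}{k+1}^{l-2} = 2^{l\tau} d^2 \binom{d+k}{k+1}^{l-1}$. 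The only subtlety is that this has an extra factor $d$ compared to the claimed $2^{l\tau} d \binom{d+k}{k+1}^{l-1}$; this is absorbed by observing that one does not multiply by the full count $\binom{d+k}{k+1}$ but can pair the factor $(\alpha_i+1) \leq d$ from one level with the count bound from the next, or more directly, by using that $\sum_{|\alpha|<d}(\alpha_i+1)$ telescopes favorably — the honest route is to strengthen the induction slightly or to note $d \cdot \binom{d+k-1}{k} \leq \binom{d+k}{k+1}$ is \emph{false} in general, so the clean fix is to absorb $(\alpha_i+1)$ into a refined per-term count; I expect this bookkeeping — getting exactly one factor of $d$ rather than two — to be the main obstacle, and it is handled by carrying the sharper inequality $\sum_{|\alpha| < d}(\alpha_i + 1)\,[\alpha + \tilde\beta \notin U] \leq \binom{d+k}{k+1}$ through the induction in place of the crude product bound.
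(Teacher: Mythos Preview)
Your overall strategy---induction on $|\beta|$ using the recursions (\ref{coeffs1})--(\ref{coeffs2}), treating $l=0,1$ separately, and for $l\ge 2$ keeping the factor $(\alpha_i+1)$ inside the sum rather than bounding it by $d$---is exactly the paper's approach. You correctly diagnose that the crude estimate $(\alpha_i+1)\le d$ together with $\#\{|\alpha|<d\}\le\binom{d+k}{k+1}$ overshoots by a factor of $d$, and you correctly identify the remedy: the sharper bound
\[
\sum_{|\alpha|<d}(\alpha_i+1)\ \le\ \binom{d+k}{k+1}.
\]

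The only gap is that you stop at stating this inequality (``I expect this bookkeeping\dots to be the main obstacle'') without proving it. In fact it is an \emph{equality}, and the paper establishes it by a short double hockey-stick computation: writing $(\alpha_i+1)=\sum_{j=0}^{\alpha_i}1$ and grouping by $e=\alpha_i$,
\[
\sum_{e=0}^{d-1}(e+1)\binom{d-1-e+k-1}{k-1}
=\sum_{j=0}^{d-1}\sum_{e=j}^{d-1}\binom{d-1-e+k-1}{k-1}
=\sum_{j=0}^{d-1}\binom{d-1-j+k}{k}
=\binom{d+k}{k+1}.
\]
With this identity in hand, your inductive step goes through verbatim and yields exactly $2^{l\tau}d\binom{d+k}{k+1}^{l-1}$, with a single factor of $d$ coming from the inductive hypothesis and the $\binom{d+k}{k+1}$ absorbing the entire weighted sum over $\alpha$. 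So nothing in your plan needs to change; you just need to supply this one combinatorial line.
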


\begin{proof}{Proof.} The proof is done by induction on $|\beta|$.
If $|\beta| =d$, then $\beta = de_i$ for some index $i$ with $1\le i \le k$ and so, we have that either $c_{\beta, \gamma} =0$ or
$c_{\beta, \gamma} = -a_{\gamma + e_i}(\gamma_i +1)T$ (see identity (\ref{rewriting})). In any case, the result holds.

Suppose now that $|\beta|  >d$. There exists an index $i$ such that
$\beta = \tilde \beta + d e_i$ with $\tilde \beta \in \N_0^k$.
By the inductive hypothesis, for every  $|\alpha|<d$ with $ \alpha +\tilde \beta \notin U$,
$$\deg  c_{\alpha +\tilde \beta, \gamma }\, T \le |\alpha + \tilde \beta|-|\gamma| + 1 \le |\beta|-|\gamma|;$$
so, identities (\ref{coeffs1}) and (\ref{coeffs2}) imply that the stated degree bound for $c_{\beta, \gamma}$ holds.

Note that $c_{\beta, \gamma, 0} = 0$, and $c_{\beta, \gamma, 1} = -({\tilde \alpha}_i + 1)a_{\tilde \alpha + e_i} $
if there exists $\tilde \alpha\in \N_0^k$ with
$|\tilde \alpha| < d$ and $\gamma = \tilde \alpha + \tilde \beta$, and  $c_{\beta, \gamma, 1} = 0$ otherwise. In any case, the bound on the coefficient size holds for $l = 0,1$. Consider now the case $l \ge 2$; from identities (\ref{coeffs1}) and (\ref{coeffs2}), using the inductive assumption we have
\begin{eqnarray*}
|c_{\beta, \gamma, l}| &=&
\Big|\sum\limits_{|\alpha|<d,\, \alpha +\tilde \beta \notin U} - a_{\alpha +e_i} (\alpha_i +1) \, c_{\alpha +\tilde \beta, \gamma, l-1 }\Big| \\
&\le&
\sum\limits_{|\alpha|<d,\, \alpha +\tilde \beta \notin U} 2^\tau(\alpha_i +1) 2^{(l-1)\tau}d\binom{d+k}{k+1}^{l-2} \\
& =& 2^{l\tau}d\binom{d+k}{k+1}^{l-2} \sum_{0 \le e \le d-1} \sum\limits_{|\alpha|<d,\, \alpha +\tilde \beta \notin U, \, \alpha_i = e } (e +1)\\
&\le& 2^{l\tau}d\binom{d+k}{k+1}^{l-2} \sum_{0 \le e \le d-1} (e+1)\binom{d-1-e+k-1}{k-1}.
\end{eqnarray*}
The result follows noticing that
$$\sum_{0 \le e \le d-1} (e+1)\binom{d-1-e+k-1}{k-1} =
\sum_{0 \le e \le d-1} \sum_{0 \le j \le e}\binom{d-1-e+k-1}{k-1} =$$
$$= \sum_{0 \le j \le d-1} \sum_{j \le e \le d-1} \binom{d-1-e+k-1}{k-1} =
\sum_{0 \le j \le d-1} \binom{d-1+k-j}{k} = \binom{d+k}{k+1}.$$
\end{proof}

\subsection{Bounds for traces and characteristic polynomial coefficients}

To estimate the size of the coefficients of the characteristic
polynomial $\chi(m_R)\in \Z[\frac1t][Y]$, we will use the following
relationship with the traces of the multiplication maps by the
powers of $R$ (see for instance \cite[Chapter 12]{BPR}): if
 $\chi(m_R)(Y) = \sum_{h=0}^{d^k}b_{d^k - h} Y^{d^k - h}$, we have
\begin{itemize}
\item $b_{d^k} = 1$,
\item for $1 \le h \le d^k$,
\begin{equation}\label{recurscoeffs}
b_{d^k - h} = - \frac1h \sum_{n = 1}^h \tr(m_{R^n})b_{d^{k}-h+n}.
\end{equation}
\end{itemize}
(See also \cite{GVT95} or \cite{Rou99}, where this technique has
been used for this task and, more generally, for the computation of
a rational univariate representation of the solutions to a
zero-dimensional polynomial system.)

For $n \in \N$, let $R^n(X) := \sum_{|\alpha| \le (d-1)n}
R^{(n)}_{\alpha}X^{\alpha}$. Let us observe that
\begin{eqnarray*}
\sum_{|\alpha| \le d-1} |R^{(1)}_\alpha| &\le& \sum_{|\alpha| \le d-1} (d-|\alpha|)|a_\alpha| \le 2^\tau \sum_{0\le e \le d-1}(d-e)\binom{e+k-1}{k-1} \\
 &=& 2^\tau \sum_{0\le e' \le d-1}(e'+1)\binom{d-1 -e'+k-1}{k-1} = 2^\tau
 \binom{d+k}{k+1},
 \end{eqnarray*}
where the last identity was shown in the proof of Lemma
\ref{bound_c};  in the general case,
\begin{equation}\label{powersize}
\sum_{|\alpha| \le (d-1)n} |R^{(n)}_\alpha| \le \Big(\sum_{|\alpha|
\le d-1} |R^{(1)}_\alpha|\Big)^n \le
\Big(2^\tau\binom{d+k}{k+1}\Big)^n.
\end{equation}

In the sequel, for every $n \in \N$, we will use the same notation
$m_{R^n}$ to denote the multiplication map by $R^n$ in $W$ or the
matrix of this linear map in the basis $\{X^\gamma \mid \gamma \in
U\}$. Rows and columns of these matrices will be indexed by the
exponent vectors $\gamma \in U$.

{}From Lemma \ref{pols1t} and the fact that $R\in \Z[X]$, it follows
that the entries of the matrices $m_{R^n}$ are polynomials in
$\Z[\frac1t]$ and, therefore, the same holds for their traces.

\begin{lemma}\label{traces} For every $n \in \N$, $\deg_{\frac1t} \tr(m_{R^n}) \le n(d-1)$ and, for $0 \le l \le n(d-1)$,
$$|\tr(m_{R^n})_l| \le 2^{(l + n)\tau}d^{k+1}\binom{d+k}{k+1}^{l+n-1}.$$
\end{lemma}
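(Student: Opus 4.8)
The plan is to compute the trace of $m_{R^n}$ directly from the matrix entries, using the re-writing machinery of Lemmas \ref{pols1t} and \ref{bound_c} together with the size estimate \eqref{powersize} for the coefficients of $R^n$. First I would write, for each $\gamma \in U$, the diagonal entry $(m_{R^n})_{\gamma,\gamma}$ of the multiplication matrix. Since $m_{R^n}([X^\gamma]) = [R^n X^\gamma] = \sum_{|\alpha|\le (d-1)n} R^{(n)}_\alpha [X^{\alpha+\gamma}]$ and $[X^{\alpha+\gamma}] = \sum_{\delta\in U} x_{\alpha+\gamma,\delta} X^\delta$, the coefficient of $X^\gamma$ in this expansion is $\sum_{|\alpha|\le(d-1)n} R^{(n)}_\alpha\, x_{\alpha+\gamma,\gamma}$. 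Hence $\tr(m_{R^n}) = \sum_{\gamma\in U}\sum_{|\alpha|\le(d-1)n} R^{(n)}_\alpha\, x_{\alpha+\gamma,\gamma}$, and by Lemma \ref{pols1t} each $x_{\alpha+\gamma,\gamma} = c_{\alpha+\gamma,\gamma}(\tfrac1t)$ with $c_{\alpha+\gamma,\gamma}\in\Z[T]$, so $\tr(m_{R^n})\in\Z[\tfrac1t]$, as already noted.

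Next I would control the degree in $\tfrac1t$. If $\alpha+\gamma\in U$ the term contributes only to degree $0$ (it is $R^{(n)}_\alpha$ times $0$ or $1$); if $\alpha+\gamma\notin U$, then Lemma \ref{bound_c} gives $\deg c_{\alpha+\gamma,\gamma} \le |\alpha+\gamma|-|\gamma| = |\alpha| \le (d-1)n$, so $\deg_{1/t}\tr(m_{R^n})\le n(d-1)$, as claimed. For the coefficient bound, fix $l$ with $0\le l\le n(d-1)$. Then
$$\tr(m_{R^n})_l = \sum_{\gamma\in U}\ \sum_{\substack{|\alpha|\le(d-1)n\\ \alpha+\gamma\notin U}} R^{(n)}_\alpha\, c_{\alpha+\gamma,\gamma,l}$$
(for $l\ge 1$; the $l=0$ case only picks up the $\alpha+\gamma\in U$ terms and is bounded even more easily). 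Apply the triangle inequality, then Lemma \ref{bound_c} to bound $|c_{\alpha+\gamma,\gamma,l}|\le 2^{l\tau}d\binom{d+k}{k+1}^{l-1}$ — note this bound is uniform in $\alpha,\gamma$, which is exactly what makes the estimate go through. Pulling this constant out,
$$|\tr(m_{R^n})_l| \le 2^{l\tau}d\binom{d+k}{k+1}^{l-1}\sum_{\gamma\in U}\sum_{|\alpha|\le(d-1)n}|R^{(n)}_\alpha| \le 2^{l\tau}d\binom{d+k}{k+1}^{l-1}\cdot d^k\cdot\Big(2^\tau\binom{d+k}{k+1}\Big)^n,$$
using $\#U = d^k$ and \eqref{powersize}. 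Collecting powers of $2^\tau$, of $d$, and of $\binom{d+k}{k+1}$ gives exactly $2^{(l+n)\tau}d^{k+1}\binom{d+k}{k+1}^{l+n-1}$, which is the desired bound.

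The only real subtlety is bookkeeping: one must be careful that for $l\ge 1$ only the terms with $\alpha+\gamma\notin U$ can contribute (for such $l$, $c_{\alpha+\gamma,\gamma}$ has zero constant term by Lemma \ref{bound_c}, and the $\alpha+\gamma\in U$ terms are constants in $\tfrac1t$), and that the degree bound from Lemma \ref{bound_c} legitimately allows $l$ up to $(d-1)n$ since $|\alpha|$ ranges up to $(d-1)n$. I expect no genuine obstacle here — the lemma is a routine consequence of the preceding two lemmas plus \eqref{powersize}; the main point is simply that the per-coefficient bound in Lemma \ref{bound_c} is independent of $\beta$ and $\gamma$, so summing over the $d^k$ basis vectors and over the monomials of $R^n$ costs only the factors $d^k$ and $(2^\tau\binom{d+k}{k+1})^n$ respectively.
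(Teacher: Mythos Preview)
Your proposal is correct and follows essentially the same route as the paper: write the diagonal entries $(m_{R^n})_{\gamma,\gamma}=\sum_\alpha R^{(n)}_\alpha\,c_{\alpha+\gamma,\gamma}(\tfrac1t)$, read off the degree bound from Lemma~\ref{bound_c}, bound each $|c_{\alpha+\gamma,\gamma,l}|$ uniformly by $2^{l\tau}d\binom{d+k}{k+1}^{l-1}$, apply \eqref{powersize} for $\sum_\alpha|R^{(n)}_\alpha|$, and multiply by $\#U=d^k$. The paper treats the case $l=0$ a bit more explicitly (only $\alpha=0$ survives, giving $|R^{(n)}_0|\le 2^{n\tau}d^n\le 2^{n\tau}d\binom{d+k}{k+1}^{n-1}$), but your remark that this case is even easier is accurate.
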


\begin{proof}{Proof.}  For every $n \in \N$ and $\gamma \in U$,
$(m_{R^n})_{\gamma, \gamma} =
\sum_{|\alpha| \le n(d-1)} R^{(n)}_\alpha c_{\gamma + \alpha, \gamma}\big(\frac{1}t\big)$, where
 $c_{\gamma + \alpha, \gamma}$ is a constant if $\gamma + \alpha \in U$ and $\deg c_{\gamma + \alpha, \gamma} \le |\gamma + \alpha| - |\gamma| = |\alpha| \le n(d-1)$ if $\gamma + \alpha \not\in U$. Now,
$$|(m_{R^n})_{\gamma, \gamma, 0}| \le
\sum_{|\alpha| \le n(d-1)} |R^{(n)}_\alpha c_{\gamma + \alpha, \gamma, 0}|
= |R^{(n)}_0| = |R^{(1)}_0|^n \le 2^{n\tau}d^n \le 2^{n\tau}d\binom{d+k}{k+1}^{n-1},$$
and, for $1 \le l \le n(d-1)$,
\begin{eqnarray*}
|(m_{R^n})_{\gamma, \gamma, l}| &\le&
\sum_{|\alpha| \le n(d-1)} |R^{(n)}_\alpha c_{\gamma + \alpha, \gamma, l}|
= \sum_{|\alpha| \le n(d-1), \, \gamma + \alpha \not \in U} |R^{(n)}_\alpha c_{\gamma + \alpha, \gamma, l}| \\
&\le& \sum_{|\alpha| \le n(d-1)} |R^{(n)}_\alpha| \,
2^{l\tau}d\binom{d+k}{k+1}^{l-1} \le\
2^{(l+n)\tau}d\binom{d+k}{k+1}^{l+n-1},
\end{eqnarray*}
where the last inequality follows from (\ref{powersize}). The stated
inequalities are now a consequence of the fact that the dimension of
$W$ is $d^k$.
\end{proof}

We are now ready to find upper bounds for the size of the coefficients of the characteristic polynomial $\chi(m_R)\in \Z[\frac1t][Y]$.

\begin{lemma}\label{bound_b} For $0 \le h \le d^k$, $\deg_{\frac1t} b_{d^k - h} \le h(d-1)$ and, for $0 \le l \le h(d-1)$,
$$|b_{d^k -h, l}| \le 2^{(l+h)(\tau+1)}d^{(k+1)h}\binom{d+k}{k+1}^{l}.$$ The last inequalities are strict for $h \ge 1$.
\end{lemma}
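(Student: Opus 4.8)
The plan is to prove Lemma~\ref{bound_b} by induction on $h$, using the recursive formula~(\ref{recurscoeffs}) together with the bounds on the traces $\tr(m_{R^n})$ obtained in Lemma~\ref{traces}. The base case $h=0$ is immediate since $b_{d^k}=1$, which has $\frac1t$-degree $0$ and coefficient $|b_{d^k,0}|=1 < 2^{0}d^0\binom{d+k}{k+1}^0 = 1$... wait, here one must be careful: the claimed strict inequality is only asserted for $h\ge 1$, so for $h=0$ we just use the non-strict bound $|b_{d^k,0}|=1 \le 1$, and the strictness will be a genuine feature of the inductive step.

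For the inductive step, fix $h$ with $1\le h\le d^k$ and assume the (strict, for the relevant indices) bounds hold for all $b_{d^k-h+n}$ with $1\le n\le h$, i.e. for all indices $h-n$ with $0\le h-n\le h-1$. First I would check the degree claim: from~(\ref{recurscoeffs}), $b_{d^k-h}$ is a $\Z$-combination (with the $\frac1h$ factor, which will be absorbed) of products $\tr(m_{R^n})\,b_{d^k-h+n}$ for $1\le n\le h$; by Lemma~\ref{traces} the first factor has $\frac1t$-degree at most $n(d-1)$ and by induction the second has $\frac1t$-degree at most $(h-n)(d-1)$, so the product has $\frac1t$-degree at most $h(d-1)$, as required. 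Then, to bound $|b_{d^k-h,l}|$ for a fixed $l$ with $0\le l\le h(d-1)$, I would expand the coefficient of $\frac1{t^l}$ in the product $\tr(m_{R^n})\,b_{d^k-h+n}$ as a sum over $l_1+l_2=l$ of $\tr(m_{R^n})_{l_1}\,b_{d^k-h+n,l_2}$, apply the trace bound $|\tr(m_{R^n})_{l_1}|\le 2^{(l_1+n)\tau}d^{k+1}\binom{d+k}{k+1}^{l_1+n-1}$ and the inductive bound $|b_{d^k-h+n,l_2}|\le 2^{(l_2+h-n)(\tau+1)}d^{(k+1)(h-n)}\binom{d+k}{k+1}^{l_2}$ (strict when $h-n\ge1$, with the separate trivial estimate when $h-n=0$), multiply them, and sum over the valid $(n,l_1,l_2)$.

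The main work — and the step I expect to be the principal obstacle — is the bookkeeping that shows the resulting triple sum collapses to the claimed closed form $2^{(l+h)(\tau+1)}d^{(k+1)h}\binom{d+k}{k+1}^{l}$, with a genuine strict inequality. Multiplying the two estimates gives a term of size roughly $2^{(l_1+l_2+n+h-n)\tau}\cdot 2^{(l_2+h-n)}\cdot d^{k+1}d^{(k+1)(h-n)}\cdot\binom{d+k}{k+1}^{l_1+l_2+n-1} = 2^{l\tau+\cdots}\cdots$; one must verify that every factor — the $2$-power exponent, the $d$-power exponent, the $\binom{d+k}{k+1}$-power exponent — comes out bounded by the target exponent, and that the number of summands (over $n$ ranging in $1,\dots,h$, over $l_1,l_2$, and over the implicit index count inside the traces, together with the $\frac1h$ prefactor) is absorbed by the slack. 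The slack to exploit is: the $\binom{d+k}{k+1}$-exponent in a product term is $l_1+l_2+n-1 = l+n-1 \le l + h - 1 < l+h$ when combined with one extra factor of $\binom{d+k}{k+1}$ picked up per unit of $n$, or more cleanly, the factor $2^{(l_2+h-n)}$ (from the $(\tau+1)$ versus $\tau$ discrepancy in the inductive hypothesis) and the extra powers of $d$ and $\binom{d+k}{k+1}$ available must be shown to dominate the $h$ (from summing over $n$) and the count of monomials. I would organize this as: bound the inner sum over $l_1+l_2=l$ by $(l+1)$ times the maximal term, bound the sum over $n=1,\dots,h$ by $h$ times the maximal term, check that $(l+1)\cdot h \le 2^{h}$-ish slack or is otherwise dominated, and conclude; the strictness for $h\ge1$ follows because even a single term is strictly below the target (e.g. already $d^{k+1} < d^{(k+1)h}\cdot(\text{something})$ leaves room, or the geometric-series summation $\sum_n$ is strictly less than the bound obtained by replacing it with the dominant term times a power of two). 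Finally I would note that, since all entries of the matrices $m_{R^n}$ lie in $\Z[\frac1t]$, so do the $b_{d^k-h}$, so the coefficients $b_{d^k-h,l}$ are well-defined integers (up to the common denominator introduced by $\frac1h$, which the induction tracks), making the size bounds meaningful.
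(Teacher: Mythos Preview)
Your overall structure---induction on $h$ via the Newton recursion~(\ref{recurscoeffs}), with the degree bound following immediately from Lemma~\ref{traces} and the inductive hypothesis---is exactly the paper's approach, and your setup for the coefficient bound (expand the $l$-th coefficient of each product $\tr(m_{R^n})\,b_{d^k-h+n}$ as a convolution over $l_1+l_2=l$, then plug in the two known estimates) is also correct.

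The gap is in the bookkeeping you flag as ``the principal obstacle.'' Your plan to bound the inner sum over $l_1+l_2=l$ by $(l+1)$ times the maximal term, and then look for ``$2^{h}$-ish slack'' to absorb the factor $(l+1)\cdot h$, does not work: the paper's final chain of inequalities ends with an \emph{equality} to the target bound, so there is no slack at all. Concretely, after multiplying the trace bound by the inductive bound, the only dependence on $l_2$ is the factor $2^{l_2}$ coming from the $(\tau+1)$-versus-$\tau$ discrepancy you already identified; so instead of the crude estimate $(l+1)\cdot 2^{l}$, you must actually sum the geometric series
\[
\sum_{l_2} 2^{l_2} \;<\; 2^{l+1},
\]
and this is precisely where the strict inequality for $h\ge 1$ enters. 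For the sum over $n$, the $\tfrac{1}{h}$ prefactor exactly cancels the $h$ terms, replacing the sum by an average, which is at most the maximum; one then checks that the summand $\binom{d+k}{k+1}^{n}\,(2d^{k+1})^{\,h-n}$ is decreasing in $n$ (because $\binom{d+k}{k+1}\le d^{k+1}<2d^{k+1}$), so the maximum occurs at $n=1$. Putting these two refinements in place, the product of bounds collapses exactly to $2^{(l+h)(\tau+1)}d^{(k+1)h}\binom{d+k}{k+1}^{l}$ with a strict inequality. Your worry about denominators is unnecessary: since $m_R$ has entries in $\Z[\tfrac1t]$, its characteristic polynomial already lies in $\Z[\tfrac1t][Y]$, so each $b_{d^k-h,l}$ is an integer regardless of the $\tfrac1h$ in the recursion.
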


\begin{proof}{Proof.} Let us prove first the degree bound.
The proof is done by induction on $h$ and using the recursive
formula (\ref{recurscoeffs}) for the coefficients $b_{d^k - h}$. For
$h = 0$, the result holds. Now, for $h > 0$, for every $1 \le n \le
h$, by Lemma \ref{traces} and the inductive assumption, $\deg
(\tr(m_{R^n})b_{d^{k}-h+n}) \le n(d-1) + (h-n)(d-1) = h(d-1)$;
therefore, $\deg b_{d^{k}-h} \le h(d-1)$.

Now we prove the bound on the size of the coefficients. For $h = 0$, the result is clear. For $h \ge 1$,
\begin{eqnarray*}
|b_{d^k - h, l}| &=& \frac1h \ \Big|\sum_{n = 1}^h \
\sum_{\genfrac{}{}{0pt}{}{l_1 + l_2 = l}{
\genfrac{}{}{0pt}{}{0 \le l_1 \le n(d-1)}{0 \le l_2 \le (h-n)(d-1)}}} \tr(m_{R^n})_{l_1}b_{d^{k}-h+n,l_2}\Big| \\
&\le&  \frac1h \sum_{n = 1}^h  \sum_{\genfrac{}{}{0pt}{}{l_1 + l_2 = l}{
\genfrac{}{}{0pt}{}{0 \le l_1 \le n(d-1)}{0 \le l_2 \le (h-n)(d-1)}}}
2^{(l_1+n)\tau}d^{k+1}\binom{d+k}{k+1}^{l_1 + n - 1}2^{(l_2 + h-n)(\tau + 1)}d^{(k+1)(h-n)}\binom{d+k}{k+1}^{l_2}\\
& = & 2^{(l +h)\tau}d^{k+1}\binom{d+k}{k+1}^{l  - 1}\frac1h \sum_{n = 1}^h
\binom{d+k}{k+1}^n2^{h-n}d^{(k+1)(h-n)}
 \sum_{\genfrac{}{}{0pt}{}{l_1 + l_2 = l}{
\genfrac{}{}{0pt}{}{0 \le l_1 \le n(d-1)}{0 \le l_2 \le (h-n)(d-1)}}}  2^{l_2} \\
 &<& 2^{(l+h)\tau }d^{k+1}\binom{d+k}{k+1}^{l - 1} \binom{d+k}{k+1}2^{h-1}d^{(k+1)(h-1)}\ 2^{l+1}\\
 &=& 2^{(l+h)(\tau + 1)}d^{(k+1)h}\binom{d+k}{k+1}^{l}.
\end{eqnarray*}
\end{proof}

\subsection{Obtaining the bound}

As explained in Subsection \ref{deformation}, from the characteristic polynomial $\chi(m_R)$,  we can obtain a univariate polynomial having $R(z_0)$ as one of its roots; thus, we get a lower bound for this value in terms of the size of the coefficients of this polynomial.

\begin{proposition} \label{boundminimizer}
Let $z_0$ be as in Lemma \ref{minimizer}. Then,
$$\frac{1}{P(z_0)} \le 2^{d^{k+1}(\tau + 1)}d^{(k+1)d^k}\binom{d+k}{k+1}^{d^k(d-1)}.$$
\end{proposition}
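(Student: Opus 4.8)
The plan is to use the fact, established in Subsection~\ref{deformation}, that $R(z_0)=d\cdot P(z_0)$ is a root of the nonzero univariate polynomial $S(0,Y)$, together with the coefficient bounds of Lemma~\ref{bound_b}, to produce a lower bound for $R(z_0)$ and hence for $P(z_0)>0$. First I would make $S(0,Y)$ explicit. Since $\chi(m_R)(t,Y)=\sum_{h=0}^{d^k}b_{d^k-h}\big(\frac1t\big)Y^{d^k-h}$ with $b_{d^k}=1$ and $\deg_{\frac1t}b_{d^k-h}\le h(d-1)$, the exponent $l$ for which $\chi(m_R)=S/t^l$ and $S(0,Y)\not\equiv0$ is $l=\max_h\deg_{\frac1t}b_{d^k-h}\le d^k(d-1)$, and
$$S(0,Y)=\sum_{h=0}^{d^k}b_{d^k-h,l}\,Y^{d^k-h}=\sum_{j=0}^{d^k}b_{j,l}\,Y^{j}\in\Z[Y],$$
the coefficients being integers because $\chi(m_R)\in\Z[\frac1t][Y]$.

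Since a nonzero monomial has no nonzero root but $R(z_0)=d\cdot P(z_0)>0$, the polynomial $S(0,Y)$ has at least two nonzero coefficients; let $j_0<n$ be the least and greatest $j$ with $b_{j,l}\ne0$, so $0\le j_0<n\le d^k$ and $|b_{j_0,l}|,|b_{n,l}|\ge1$. Dividing $S(0,R(z_0))=0$ by $R(z_0)^{j_0}\ne0$ gives $b_{j_0,l}=-\sum_{j=j_0+1}^{n}b_{j,l}\,R(z_0)^{\,j-j_0}$. If $R(z_0)>1$ then $R(z_0)>1\ge\big(\sum_{j=1}^{d^k}|b_{j,l}|\big)^{-1}$ trivially; if $R(z_0)\le1$ then $R(z_0)^{\,j-j_0}\le R(z_0)$ for $j>j_0$, so $|b_{j_0,l}|\le R(z_0)\sum_{j=j_0+1}^{n}|b_{j,l}|$ and, using $|b_{j_0,l}|\ge1$, again $R(z_0)\ge\big(\sum_{j=j_0+1}^{n}|b_{j,l}|\big)^{-1}\ge\big(\sum_{j=1}^{d^k}|b_{j,l}|\big)^{-1}$. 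In either case
$$\frac{1}{P(z_0)}=\frac{d}{R(z_0)}\le d\sum_{j=1}^{d^k}|b_{j,l}|=d\sum_{h=0}^{d^k-1}|b_{d^k-h,l}|.$$
The point I expect to matter most is that this last sum runs only over $h\le d^k-1$, so it omits $b_{0,l}$, the term for which the bound in Lemma~\ref{bound_b} is largest; dropping this dominant term is exactly what absorbs the extra factor $d$ and the number of summands.

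Finally I would insert the bounds of Lemma~\ref{bound_b}. Writing $r:=2^{\tau+1}d^{k+1}\ge2$,
$$\sum_{h=0}^{d^k-1}|b_{d^k-h,l}|\le 2^{l(\tau+1)}\binom{d+k}{k+1}^{l}\sum_{h=0}^{d^k-1}r^{h}<2^{l(\tau+1)}\binom{d+k}{k+1}^{l}\cdot 2\,r^{\,d^k-1},$$
using $\sum_{h=0}^{N}r^{h}<r^{N+1}/(r-1)\le 2r^{N}$, valid for $r\ge2$. Multiplying by $d$ and expanding $r$ gives
$$\frac{1}{P(z_0)}<2^{(\tau+1)(l+d^k-1)+1}\,d^{(k+1)(d^k-1)+1}\binom{d+k}{k+1}^{l},$$
and the proof concludes via the exponent comparisons $(\tau+1)(l+d^k-1)+1\le(\tau+1)d^{k+1}$, $(k+1)(d^k-1)+1\le(k+1)d^k$, and $\binom{d+k}{k+1}^{l}\le\binom{d+k}{k+1}^{d^k(d-1)}$, all immediate from $l\le d^k(d-1)$ and $\binom{d+k}{k+1}\ge1$. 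These last steps are purely arithmetic; the only real subtlety is the one flagged above --- arranging the estimate so the dominant coefficient $b_{0,l}$ is absent from the sum, which relies on the reduction of $S(0,Y)$ by $Y^{j_0}$ and on $R(z_0)\ne0$.
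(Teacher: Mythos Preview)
Your proof is correct and follows essentially the same route as the paper: both extract the integer polynomial $S(0,Y)$ from the top $\frac1t$-coefficients of $\chi(m_R)$, use that $R(z_0)=d\cdot P(z_0)$ is a nonzero root of it, and then plug in the coefficient estimates of Lemma~\ref{bound_b}. The only tactical differences are that the paper cites the Cauchy-type root bound from \cite{MS} on the reciprocal polynomial (bounding $1/R(z_0)$ by $1+\max_h|b_{d^k-h,l_0}|$) and first argues by contradiction that $l_0\le(d^k-1)(d-1)$, whereas you use an elementary sum-based bound $R(z_0)\ge\big(\sum_{j\ge1}|b_{j,l}|\big)^{-1}$ and work directly with $l\le d^k(d-1)$; your device of summing only over $h\le d^k-1$ plays the same role as the paper's refinement of $l_0$, and both lead to the stated inequality.
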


\begin{proof}{Proof.} Take $l_0 := \max_{0 \le h \le d^k} \deg b_{d^k - h}$.
Then,  $\chi(m_R)= \frac{S(t,Y)}{t^{l_0}}$, where  $$S(t, Y) = t^{l_0} \sum_{h = 0}^{d^k} b_{d^k - h}(\frac1t)Y^{d^k - h} =
 \sum_{h = 0}^{d^k} \sum_{l = 0}^{l_0} b_{d^k - h,l}\, t^{l_0 - l}\, Y^{d^k - h}\in \Z[t,Y],$$
and, therefore,
$S(0, Y) =
 \sum_{h = 0}^{d^k} b_{d^k - h,l_0} Y^{d^k - h}\in \Z[Y].$
Since $(0, z_0) \in V$, we have that $S(0, R(z_0)) = 0$, which implies that $\frac{1}{R(z_0)}$ is a root of the polynomial $\sum_{h = 0}^{d^k} b_{d^k - h,l_0} Y^h .$

If $l_0 > (d^k-1)(d-1)$, then $b_{d^k - h, l_0} = 0$ for every $0 \le h \le d^k -1$, and so $b_{0, l_0}\big(\frac1{R(z_0)}\big)^{d^k} = 0$, which is impossible since both factors are nonzero.
Let $h_1 := \max\{h \ | \ b_{d^k - h, l_0} \ne 0\} \le d^k$. By \cite[Prop.~2.5.9]{MS},
$$\frac{1}{R(z_0)} \le \max_{0 \le h \le h_1-1}\Big|\frac{b_{d^k - h, l_0}}{b_{d^k - h_1, l_0}}\Big| + 1.$$
Since $b_{d^k - h_1, l_0} \in \Z-\{0\}$ and the size inequalities in Lemma \ref{bound_b} are strict for $h>0$,
$$\frac{1}{d\cdot P(z_0)} = \frac{1}{R(z_0)} \le 2^{(d^k-1)d(\tau + 1)}d^{(k+1)(d^k-1)}\binom{d+k}{k+1}^{(d^k-1)(d-1)},$$
which implies the result.
\end{proof}

By Lemma \ref{minimizer} and  Proposition \ref{boundminimizer},
we deduce the following lower bound for the minimum of a positive polynomial over the standard simplex in the case this minimum is attained only at interior points of the simplex:

\begin{proposition}\label{minint}
Let $P\in \cA_{k,d,\tau}^{(0)}$. Then $$\min_{\Delta_k} P \ge
2^{-(\tau+1)  d^{k+1}} d^{-(k+1) d^{k}}
\binom{d+k}{k+1}^{-d^k(d-1)}.$$
\end{proposition}

\subsection{Proof of the main result}

The case where the minimum is attained at a point of the boundary of $\Delta_k$ can be dealt with recursively, since the facets of $\Delta_k$ are standard $(k-1)$-dimensional simplices.

We are now ready to prove the main result of the paper.


\begin{proof}{Proof of Theorem \ref{theorembound}.}
We argue by induction on $k$. For $k=1$, the bound is a consequence of Proposition \ref{minint} and the fact that $P(0)\ge 1$ and $P(1)\ge 1$ for every $P\in \cA_{k, d, \tau}$.

Assume now $k>1$ and let $P\in \cA_{k,d,\tau}$. When $d=1$, $P$ is a linear affine polynomial and so, the minimum is attained at a vertex of the simplex, which implies that it is an integer. Then, $m_{k,1,\tau} \ge 1$ for every $k, \tau$. Thus, we may assume $d\ge 2$.

If $P\in \cA_{k,d,\tau}^{(0)}$, the bound follows from Proposition
\ref{minint}. Suppose $P\in \cA_{k,d,\tau}^{(b)}$ and let $z\in
\partial \Delta_k$ with $P(z) = \min_{\Delta_k} P$. If $z_i = 0$ for some
$1\le i\le k$, the polynomial $P_i$ obtained by evaluating  $X_i = 0$ in $P$ satisfies $P_i \in \cA_{k-1,d, \tau}$ and
$$P(z) = P_i(z_1,\dots, \widehat z_i,\dots, z_n) \ge m_{k-1, d, \tau}\ge 2^{-(\tau+1)
d^{k}} d^{-k d^{k-1}} \binom{d+k-1}{k}^{-d^{k-1}(d-1)}$$
(here, $(z_1,\dots, \widehat z_i, \dots, z_n)\in \Delta_{k-1}$ is the point obtained by removing the $i$th coordinate from $z\in \Delta_k$).
On the other hand, if
$\sum_{i=1}^k z_i =1$, consider the polynomial $\widetilde P =
P(X_1,\dots, X_{k-1}, 1-(X_1 +\cdots +X_{k-1}))$. By \cite[Lemma
2.3]{BLR}, $\widetilde P \in \cA_{k-1,d, \tau +1+ d \log k}$ and,
therefore $$P(z) = \widetilde P(z_1,\dots, z_{k-1}) \ge m_{k-1, d, \tau +1+ d \log k}\ge  2^{-(\tau +2+ d
\log k) d^{k}} d^{-k d^{k-1}}\binom{d+k-1}{k}^{-d^{k-1}(d-1)}.$$

In order to finish the proof, it suffices to show that for every $d \ge 2$, and every $k\in \N$,
\begin{equation}\label{induction}
2^{d^{k}(\tau + 2 + d \log k)}d^{kd^{k-1}}\binom{d+k-1}{k}^{d^{k-1}(d-1)}
\le 2^{d^{k+1}(\tau + 1)}d^{(k+1)d^k}\binom{d+k}{k+1}^{d^k(d-1)}.
\end{equation}
First, we show by induction on $d$, that for every $d\ge 2$ and every $k\in \N$, the inequality
$k^{2d-1} \le 2^{d^2 - 2d}d^{(k+1)d -k}$ holds: the case $d=2$ follows easily; in addition,
\begin{eqnarray*}
k^{2(d+1)-1} &\le&  2^{k+4}k^{2d-1} \le 2^{2d-1}d^{k+1}k^{2d-1} \le
  2^{2d-1}d^{k+1}2^{d^2 - 2d}d^{(k+1)d -k}\\ & = & 2^{(d+1)^2 - 2(d+1)}d^{(k+1)(d+1) -k}  \le 2^{(d+1)^2 - 2(d+1)}(d+1)^{(k+1)(d+1) -k}.
  \end{eqnarray*}
Then,
$k^{2d^k-d^{k-1}} = (k^{2d-1})^{d^{k-1}} \le (2^{d^2 - 2d}d^{(k+1)d -k})^{d^{k-1}} = 2^{d^{k+1} - 2d^k}d^{(k+1)d^k -kd^{k-1}}$
and, therefore,
$$2^{2d^k}k^{d^{k+1}}d^{kd^{k-1}}\binom{d+k-1}{k}^{d^{k-1}(d-1)} \le
2^{d^{k+1}}d^{(k+1)d^{k}}k^{d^{k+1}-2d^k+d^{k-1}}\binom{d+k-1}{k}^{d^{k-1}(d-1)}.$$
 Since $\binom{d+k}{k+1}\ge k$ and $\binom{d+k}{k+1}\ge \binom{d+k-1}{k}$, we conclude that
$$2^{2d^k}k^{d^{k+1}}d^{kd^{k-1}}\binom{d+k-1}{k}^{d^{k-1}(d-1)} \le
2^{d^{k+1}}d^{(k+1)d^{k}}\binom{d+k}{k+1}^{d^{k}(d-1)},$$
which implies that inequality (\ref{induction}) holds.
\end{proof}

\section{An example}

The following example shows that the doubly exponential character of the bound is unavoidable.

\begin{example}
Let $\tau$ and $d$ be even positive integers, $d \ge 4$. Consider the polynomial
$$P(X_1, \dots, X_k) = (2^{\tau/2} X_1 - 1)^2 + (X_2 - X_1^{d/2})^2 + \dots + (X_{k} - X_{k-1}^{d/2})^2 + X_k^d.$$
Note that $P$ is positive over $\R^k$. 
Substituting $X_i = 2^{-\frac{\tau}{2}(\frac{d}{2})^{i-1}}$
for $i = 1, \dots, k$, it follows that the minimum of $P$ over the standard simplex of $\R^k$ is lower than or equal to
$2^{-\tau(\frac{d}{2})^{k}}$.
\end{example}

\bigskip

\noindent \textbf{Acknowledgements.} The authors wish to thank
Marie-Fran\c{c}oise Roy for suggesting them the problem and a
possible approach to it. They also acknowledge Saugata Basu, Richard
Leroy and Marie-Fran\c{c}oise Roy for their comments on a first
version of the paper.

\end{document}